\theoremstyle{plain}
\newtheorem{theorem}{Theorem} [section]
\newtheorem{corollary}[theorem]{Corollary}
\newtheorem{lemma}[theorem]{Lemma}
\newtheorem{proposition}[theorem]{Proposition}
\theoremstyle{definition}
\newtheorem{definition}[theorem]{Definition}
\newtheorem{example}[theorem]{Example}
\numberwithin{equation}{section}
\begin{document}

\title[Cantor sets]{Classifying Cantor sets by their Fractal dimensions}
\author[C.~Cabrelli]{Carlos A. Cabrelli}
\address[C. Cabrelli and U. Molter]{Departamento de Matem\'atica\\
Facultad de Ciencias Exactas y Naturales\\
Universidad de Buenos Aires\\
Pabell\'on I, Ciudad Universitaria\\
C1428EGA C.A.B.A.\\
Argentina \\
and CONICET, Argentina}
\email[C.Cabrelli]{cabrelli@dm.uba.ar}
\urladdr{http://mate.dm.uba.ar/~cabrelli}
\email[U.~Molter]{umolter@dm.uba.ar}
\urladdr{http://mate.dm.uba.ar/~umolter}
\thanks{C. Cabrelli and U. Molter are partially supported by Grants UBACyT
X149 and X028 (UBA), PICT 2006-00177 (ANPCyT), and PIP 112-200801-00398 (CONICET)}
\author[K.Hare]{Kathryn E. Hare}
\address[K. Hare]{Department of Pure Mathematics\\
Univeersity of Waterloo\\
Waterloo, ON }
\email[K.~Hare]{kehare@uwaterloo.edu}
\urladdr{http://www.math.uwaterloo.ca/{PM\_Dept}/Homepages/Hare/hare.shtml}
\thanks{K.~Hare is partially supported by NSERC}
\author[U.~Molter]{Ursula M. Molter}
\subjclass[2000]{Primary 28A78, 28A80}
\keywords{Hausdorff dimension, Packing dimension, Cantor set, Cut-out set}
\thanks{This paper is in final form and no version of it will be submitted
for publication elsewhere.}

\begin{abstract}
In this article we study Cantor sets defined by monotone sequences, in the
sense of Besicovich and Taylor. We classify these Cantor sets in terms of
their $h$-Hausdorff and $h$-Packing measures, for the family of dimension
functions $h$, and characterize this classification in terms of the
underlying sequences.
\end{abstract}

\maketitle

\section{Introduction}

A natural way to classify compact subsets $E\subseteq $ $\mathbb{R}$ of
Lebesgue measure zero is by their Hausdorff or packing dimension. This is a
crude measurement, however, which often does not distinguish salient
features of the set. For a finer classification, one could consider the
family of $h$-Hausdorff measures, $H^{h},\,$and $h$-packing measures, $P^{h}$%
, where $h$ belongs to the set of dimension functions $\mathcal{D}$, defined in Section \ref{sec-measures}.

\begin{definition}

By the \emph{dimension partition} of a set $E$, we mean a partition of $\mathcal{D}$ into
(six) sets $\mathcal{H}_{\alpha }^{E}\cap \mathcal{P}_{\beta }^{E},$ for $%
\alpha \leq \beta \in \{0,1,\infty \},$ where
\begin{eqnarray*}
\mathcal{H}_{\alpha }^{E} &=&\{h\in \mathcal{D}:H^{h}(E)=\alpha \}\text{ for
}\alpha =0,\infty , \\
\mathcal{H}_{1}^{E} &=&\{h\in \mathcal{D}:0<H^{h}(E)<\infty \},
\end{eqnarray*}%
and $\mathcal{P}_{\beta }^{E}$ is defined similarly, but with $h$-packing
measure replacing $h$-Hausdorff measure.
\end{definition}

Sets which have the same dimension partition will have the same Hausdorff
and packing dimensions, however the converse is not necessarily true.

We call a compact, perfect, totally disconnected, measure zero subset of the
real line a Cantor set. There is a natural way (see section \ref%
{construction}) to associate to each summable, non-increasing sequence $%
a=\{a_{n}\}\subseteq \mathbb{R}^{+},$ a unique Cantor set $C_{a}$ having
gaps with lengths corresponding to the terms $a_{n}$. The study of the
dimension of Cantor sets by means of its gaps was initiated by Besicovich
and Taylor \cite{BT}. In fact, the Hausdorff and packing dimensions of $C_{a}
$ can be calculated in terms of the tails of the sequence $a$, the numbers $%
r_{n}^{(a)}=\sum_{i\geq n}a_{i},$ for $n\in \mathbb{N}$. In this paper we
show that the classification of the sets $C_{a}$ according to their
dimension partitions can be characterized in terms of properties of their
tails.

We introduce a partial ordering, $\preceq ,$ on the set of dimension
functions, which preserves the order of Hausdorff and packing $h$-measures.
A set $E$ is said to be $h$-regular if $h\in \mathcal{H}_{1}^{E}\cap
\mathcal{P}_{1}^{E}$. If this holds for $h=x^{s}$ then $E$ is said to be $s$%
\emph{-}regular\emph{, }and in that case the Hausdorff and packing
dimensions of $E$ are both $s$. Although not every Cantor set, $C_{a}$, of
dimension $s$ is an $s$-regular set, we show that one can always find a
dimension function $h$ such that $C_{a}$ is an $h$-regular set. This
completes arguments begun in \cite{CMMS} and \cite{GMS}. The dimension
functions for which a given Cantor set, $C_{a}$, is regular form an
equivalence class under this ordering and are called the dimension functions
associated to the sequence $a.$

We prove that two Cantor sets, $C_{a}$, $C_{b},$ have the same dimension
partition if and only if their associated dimension functions, $h_{a}$, $%
h_{b},$ are equivalent.
 More generally, we prove that $C_{a}$ and $C_{b}$ have the same dimension
partition if and only $a$ and $b$ are \textit{weak tail-equivalent} (see Definition \ref{weak-tail}).

Furthermore, we show that the weak tail-equivalence, can be replaced by the stronger \textit{tail-equivalence} relation, only when the associated 
dimension functions have inverse with the doubling property.

\section{Dimension Functions and Measures}\label{sec-measures}

\label{sec2}
A function $h$ is said to be {\em increasing} if $h(x) < h(y)$ for $x < y$ and {\em doubling} if there exists $\tau >0$ such that $%
h(x)\geq \tau\, h(2x)$ for all $x$ in the domain of $h$. We will say that a
function $h:(0,A]\rightarrow (0,\infty ]$, is a {\em dimension function}
if it is continuous, increasing, doubling and $h(x)\rightarrow 0$ as $%
x\rightarrow 0$.
We denote by $\mathcal{D}$ the set of dimension functions.
 A typical  example of a
dimension function is $h_{s}(x)=x^{s}$ for some $s>0$.

Given any dimension function  $h$, one can define the $h$\textit{-Hausdorff
measure\ }of a set $E,$ $H^{h}(E),$ in the same manner as Hausdorff $s$%
-measure (see \cite{Rog}): Let $\left\vert B\right\vert $ denote the
diameter of a set $B$. Then

\begin{equation*}
H^{h}(E)=\lim_{\delta \rightarrow 0^{+}}\left( \text{ inf }\left\{ \sum
h(\left\vert E_{i}\right\vert ):\bigcup E_{i}\subseteq E\text{, }\left\vert
E_{i}\right\vert \leq \delta \right\} \right) .
\end{equation*}%
Hausdorff $s$-measure is the special case when $h=h_{s}$. In terms of our
notation, the Hausdorff dimension of $E$ is given by
\begin{equation*}
\dim _{H}E=\sup \{s:h_{s}\in \mathcal{H}_{\infty }^{E}\}=\inf \{s:h_{s}\in
\mathcal{H}_{0}^{E}\}.
\end{equation*}

The $h$-packing pre-measure is defined as in \cite{Tr}. First, recall that a
$\delta $-packing of a given set $E$ is a disjoint family of open balls
centered at points of $E$ with diameters less than $\delta $. The $h$-%
\textit{packing pre-measure }of $E$ is defined by
\begin{equation*}
P_{0}^{h}(E)=\lim_{\delta \rightarrow 0^{+}}\left( \text{ sup }\left\{
\sum h(|B_{i}|):\{B_{i}\}_{i}\ \text{is a $\delta $-packing of $%
E$}\right\} \right) .
\end{equation*}%
It is clear from the definition that the set function $P_{0}^{h}$ is
monotone, but it is not a measure because it is not $\sigma $-sub-additive.
The $h$\textit{-packing measure} is obtained by a standard argument:
\begin{equation*}
P^{h}(E)=\inf \left\{\ \sum_{i=1}^{\infty }P_{0}^{h}(E_{i}):E=\bigcup E_{i}\right\}.
\end{equation*}%
The pre-packing dimension is the critical index given by the formula
\begin{equation*}
\dim _{P_{0}}E=\sup \{s:P_{0}^{h_{s}}(E)=\infty \}=\inf
\{s:P_{0}^{h_{s}}(E)=0\}
\end{equation*}%
and is known to coincide with the upper box dimension \cite{Tr}. Clearly, $%
P^{h}(E)\leq P_{0}^{h}(E)$ and as $h$ is a doubling function $H^{h}(E)\leq
P^{h}(E)$ (see \cite{TT}). The packing dimension is defined analogously and
like Hausdorff dimension can be obtained as:%
\begin{equation*}
\dim _{P}(E)=\sup \{s:h_{s}\in \mathcal{P}_{\infty }^{E}\}=\inf \{s:h_{s}\in
\mathcal{P}_{0}^{E}\}.
\end{equation*}

Finally, we note that the set $E$ is said to be $h$\textit{-regular} if $%
0<H^{h}(E)\leq P^{h}(E)<\infty $ and $s$-regular if it is $h_{s}$-regular.
The set, $\mathcal{H}_{1}^{E}\cap \mathcal{P}_{1}^{E}$, consists of the $h$%
-regular functions of $E.$

We are interested in comparing different dimension functions.

\begin{definition}
\renewcommand{\labelenumi}{\normalfont (\alph{enumi}) }Suppose $%
f,h:X\rightarrow (0,A].$ We will say $f\preceq h$ if there exists a positive constant
$c$ such that%
\begin{equation*}
f(x)\leq c\,h(x)\text{ for all }x.
\end{equation*}%
We will say $f$ is \textit{equivalent} to $h$, and write $f\equiv h$, if $f$%
\textit{\ }$\preceq h$ and $h$\textit{\ }$\preceq f$. \
\end{definition}

This defines a partial ordering that is consistent with the usual pointwise
ordering of functions. It is not quite the same as the ordering defined in
\cite{GMS}, but we find it to be more natural.

Note that the definition of equivalence of functions, when applied to sequences, implies that $x=$ $%
\{x_{n}\}$ and $y=\{y_{n}\}$ are equivalent if and only if there exist $%
c_{1},c_{2}>0$ such that $c_{1}\leq x_{n}/y_{n}\leq c_{2}$ for all $n.$

The following easy result is very useful and also motivates the definition
of $\preceq $.

\begin{proposition}
\label{comparable}Suppose $h_{1},h_{2}\in \mathcal{D}$ and $h_{1}\preceq
h_{2}$. There is a positive constant $c$ such that for any Borel set $E,$
\begin{equation*}
H^{h_{1}}(E)\leq cH^{h_{2}}(E)\text{ and }P_{(0)}^{h_{1}}(E)\leq
cP_{(0)}^{h_{2}}(E).
\end{equation*}
\end{proposition}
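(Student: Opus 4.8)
The plan is to push the pointwise inequality $h_1(x) \le c\, h_2(x)$—where $c$ is the constant witnessing $h_1 \preceq h_2$—through each of the three definitions, exploiting that in every case the relevant set function is built from sums of the form $\sum h(|B_i|)$ by taking an infimum, a supremum, and a limit as $\delta \to 0^+$, all of which respect multiplication by the positive constant $c$.

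For the Hausdorff measures I would fix $\delta > 0$ and take any cover $\{E_i\}$ of $E$ with $|E_i| \le \delta$. Summing the pointwise inequality gives $\sum h_1(|E_i|) \le c \sum h_2(|E_i|)$, and taking the infimum over all such covers bounds the $\delta$-level Hausdorff quantity for $h_1$ by $c$ times that for $h_2$; letting $\delta \to 0^+$ then yields $H^{h_1}(E) \le c\, H^{h_2}(E)$.

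For the packing pre-measures the argument is dual. I would fix $\delta>0$, take any $\delta$-packing $\{B_i\}$ of $E$, and note $\sum h_1(|B_i|) \le c \sum h_2(|B_i|)$. Since the right-hand side is at most $c$ times the supremum defining the $\delta$-level pre-measure for $h_2$, and this holds for every $\delta$-packing, taking the supremum on the left gives the inequality at the $\delta$-level; sending $\delta \to 0^+$ gives $P_0^{h_1}(E) \le c\, P_0^{h_2}(E)$. For the packing measures I would then apply this last inequality to each piece of an arbitrary countable decomposition $E = \bigcup_i E_i$, obtaining $\sum_i P_0^{h_1}(E_i) \le c \sum_i P_0^{h_2}(E_i)$, and take the infimum over all such decompositions to conclude $P^{h_1}(E) \le c\, P^{h_2}(E)$. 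This settles both cases abbreviated by $P_{(0)}^{h}$.

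There is no serious obstacle here; the only point deserving care is the bookkeeping of when $c$ is pulled out relative to the infimum/supremum, together with the observation that a single constant $c$ works uniformly at every scale $\delta$ and hence persists in the limit. I note that the doubling hypothesis on the $h_i$ is not needed for this comparison—it enters the theory elsewhere, for instance in guaranteeing $H^h \le P^h$.
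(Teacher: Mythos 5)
Your proof is correct and follows essentially the same route as the paper: push the pointwise constant $c$ through the sums and then through the infimum/supremum at each level $\delta$. You are in fact slightly more complete than the paper, which only writes out the Hausdorff case and dismisses the packing (pre-)measure cases as ``similar,'' whereas you explicitly carry the inequality through the countable decompositions defining $P^{h}$.
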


\begin{proof}
Suppose $h_{1}(x)\leq ch_{2}(x)$ for all $x$. For any $\delta >0,$
\begin{align*}
H_{\delta }^{h_{1}}(E)& =\inf \left\{ \sum h_{1}(|U_{i}|), E\subseteq \cup _{i}U_{i}, 
|U_{i}|<\delta\right\} \\
& \leq c\inf \left\{ \sum h_{2}(|U_{i}|),E\subseteq \cup
_{i}U_{i}, |U_{i}|<\delta \right\} =cH_{\delta }^{h_{2}}(E).
\end{align*}%
The arguments are similar for packing pre-measure.
\end{proof}

\begin{corollary}
\label{equiv}If $h_{1},h_{2}\in \mathcal{D}$ and $h_{1}\equiv h_{2}$, then
for any Borel set $E$, $h_{1}$ and $h_{2}$ belong to the same set $\mathcal{H%
}_{\alpha }^{E}\cap \mathcal{P}_{\beta }^{E}$.
\end{corollary}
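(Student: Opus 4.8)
The plan is to derive this directly from Proposition \ref{comparable} by exploiting that $h_{1}\equiv h_{2}$ supplies comparability in both directions. Since $h_{1}\equiv h_{2}$ means $h_{1}\preceq h_{2}$ and $h_{2}\preceq h_{1}$, I would apply Proposition \ref{comparable} twice to produce positive constants $c_{1},c_{2}$ with
\[
c_{1}H^{h_{2}}(E)\leq H^{h_{1}}(E)\leq c_{2}H^{h_{2}}(E),
\]
and, using the packing half of the same proposition, the analogous two-sided bounds relating $P^{h_{1}}(E)$ and $P^{h_{2}}(E)$.

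The key observation is then that each defining condition for the sets $\mathcal{H}_{\alpha}^{E}$ and $\mathcal{P}_{\beta}^{E}$ is invariant under multiplication of the measure value by a positive finite constant. Concretely, from $H^{h_{1}}(E)\leq c_{2}H^{h_{2}}(E)$ one sees that $H^{h_{2}}(E)=0$ forces $H^{h_{1}}(E)=0$, while the reverse inequality gives the converse, so $h_{1}\in\mathcal{H}_{0}^{E}\iff h_{2}\in\mathcal{H}_{0}^{E}$. The same pair of inequalities shows $H^{h_{2}}(E)=\infty\iff H^{h_{1}}(E)=\infty$, hence agreement on $\mathcal{H}_{\infty}^{E}$; and since the remaining case $0<H^{h}(E)<\infty$ is precisely the complement of the other two, the two functions agree on $\mathcal{H}_{1}^{E}$ as well. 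Repeating this verbatim with $P^{h_{i}}$ in place of $H^{h_{i}}$ settles the packing sets $\mathcal{P}_{\beta}^{E}$, and combining the two, $h_{1}$ and $h_{2}$ lie in the same intersection $\mathcal{H}_{\alpha}^{E}\cap\mathcal{P}_{\beta}^{E}$.

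There is no serious obstacle here; the argument is essentially a trichotomy-preservation check. The only point genuinely requiring both directions of the comparison (rather than a single inequality) is the finite-positive case $\mathcal{H}_{1}^{E}$, where one needs the upper bound to exclude the value $\infty$ and the lower bound to exclude $0$. I would therefore be careful to invoke Proposition \ref{comparable} for both $h_{1}\preceq h_{2}$ and $h_{2}\preceq h_{1}$, so that both comparison constants are in hand before running the case analysis.
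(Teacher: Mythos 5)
Your argument is correct and is exactly the intended derivation: the paper states this corollary without proof as an immediate consequence of Proposition \ref{comparable}, and your two-sided application of that proposition (noting that the conditions $=0$, $=\infty$, and $0<\cdot<\infty$ are each invariant under multiplication by a positive finite constant) is the standard way to fill it in. No gaps.
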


\section{Cantor sets associated to sequences}

\subsection{Cantor sets $C_{a}$}

\label{construction}

Each Cantor set is completely determined by its gaps, the bounded convex
components of the complement of the set. To each summable sequence of
positive numbers, $a=\{a_{n}\}_{n=1}^{\infty }$, we can associate a unique
Cantor set with gaps whose lengths correspond to the terms of this sequence.

To begin, let $I$ be an interval of length $\sum_{n=1}^{\infty }a_{n}$. We
remove from $I$ an interval of length $a_{1}$; then we remove from the left
remaining interval an interval of length $a_{2}$ and from the right an
interval of length $a_{3}$. Iterating this procedure, it is easy to see that
we are left with a Cantor set which we will call $C_{a}$.

Observe that as $\sum a_{k}=|I|$, there is only one choice for the location
of each interval to be removed in the construction. More precisely, the position of the first gap we place (of length $a_1$) is uniquely determined by the property that the length of the remaining interval on its left should be $a_2 + a_4 + a_5 + a_8 +  \dots$. Therefore this
construction defines the Cantor set unequivocally. As an example, if we take $%
a_{n}=1/3^{k}$ for $n=2^{k-1},...,2^{k}-1$, $k= 1, 2, \dots$ the classical middle-third
Cantor set is produced. In this case the sequence $\{a_{n}\}$ is
non-increasing.

This is also the case for any central Cantor set with fixed rate of
dissection, those Cantor sets constructed in a similar manner than the
classical $1/3$ Cantor set but replacing $1/3$ by a number $0<a<1/2$ where $%
a $ is the ratio of the length of an interval of one step and the length of
its parent interval.

We should remark that the order of the sequence is important. Different
rearrangements could correspond to different Cantor sets, even of different dimensions;
 however, if two
sequences correspond to the same Cantor set, one is clearly a rearrangement
of the other. From here on we will assume that our sequence is positive,
non-increasing and summable.

Given such a sequence, $a=\{a_{n}\},$ we denote by $r_{n}=r_{n}^{(a)}$ the
\textit{tail} of the series:
\begin{equation*}
r_{n}=\sum_{j\geq n}a_{j},
\end{equation*}%
The Hausdorff and pre-packing dimensions of $C_{a}$ are given by the
formulas (see \cite{CMMS} and \cite{GMS})%
\begin{equation*}
\dim _{H}C_{a}=\varliminf_{n\rightarrow \infty }\frac{-\log n}{\log (r^{(a)}_{n}/n)%
}\text{ and }\dim _{P_{0}}C_{a}=\varlimsup_{n\rightarrow \infty }\frac{-\log
n}{\log (r^{(a)}_{n}/n)}.
\end{equation*}

Motivated by the analogous result in \cite{BT} for $s$-Hausdorff measure, it
was shown in \cite{GMS} that for any dimension function $h$, the Hausdorff $%
h $-measure and $h$-packing premeasure of the Cantor set $C_{a}$ are
determined by the limiting behaviour of $h(r_{n}/n)$.

\begin{theorem}
\cite[Prop. 4.1, Thm. 4.2]{GMS}\label{hmeasures} For any $h\in \mathcal{D}$,
\begin{equation*}
\frac{1}{4}\varliminf_{n\rightarrow \infty }nh(\frac{r_{n}^{(a)}}{n})\leq
H^{h}(C_{a})\leq 4\varliminf_{n\rightarrow \infty }nh(\frac{r_{n}^{(a)}}{n})
\end{equation*}%
and similarly for $P_{0}^{h}(C_{a})$, but with lim sup replacing lim inf.
\end{theorem}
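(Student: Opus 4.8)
The plan is to prove both inequalities by exploiting the combinatorial skeleton of the construction. After the $n-1$ largest gaps $a_{1},\dots ,a_{n-1}$ have been deleted, each deletion having split one interval into two, the set $C_{a}$ is covered by exactly $n$ closed ``construction intervals'' $I_{1}^{(n)},\dots ,I_{n}^{(n)}$, and since the total deleted length is $r_{1}-r_{n}$ their lengths satisfy $\sum_{j}|I_{j}^{(n)}|=r_{n}$. Thus their average length is $r_{n}/n$, which is exactly the source of the quantity $h(r_{n}/n)$. I would carry the argument out for $H^{h}$ and then transcribe it, with $\delta$-packings replacing covers and $\varlimsup $ replacing $\varliminf $, to obtain the statement for $P_{0}^{h}$.

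For the upper bound I would avoid summing $h$ directly over the (possibly very unequal) intervals $I_{j}^{(n)}$, since that would cost a constant depending on the doubling ratio of $h$. Instead, for each $n$ I tile every $I_{j}^{(n)}$ by $\lceil |I_{j}^{(n)}|\,n/r_{n}\rceil $ intervals of the common length $r_{n}/n$; the total number used is at most $\sum_{j}\bigl(|I_{j}^{(n)}|\,n/r_{n}+1\bigr)=2n$. This produces a cover of $C_{a}$ by at most $2n$ sets each of diameter $r_{n}/n$ (which tends to $0$), whence $H^{h}(C_{a})\le 2n\,h(r_{n}/n)$ for every $n$, and taking $\varliminf $ gives the upper estimate with room to spare. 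The merit of this step is that the constant it produces is universal, independent of $h$.

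For the lower bound I would invoke the mass distribution principle with the natural probability measure $\mu $ on $C_{a}$ that assigns comparable mass to each of the $n$ construction intervals at level $n$, so that $\mu (I_{j}^{(n)})$ is comparable to $1/n$. Given a test set $U$ of small diameter $d$, I choose $n$ with $r_{n+1}/(n+1)<d\le r_{n}/n$, which is possible because the sequence $r_{n}/n$ decreases to $0$. Since $d\le r_{n}/n$, the level-$n$ intervals have length of order at least $d$, so $U$ meets only boundedly many of them and $\mu (U)$ is of order at most $1/n$; since $d>r_{n+1}/(n+1)$, monotonicity of $h$ gives $h(d)>h(r_{n+1}/(n+1))\ge (L-\varepsilon )/(n+1)$, where $L=\varliminf_{m}m\,h(r_{m}/m)$. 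Dividing, $\mu (U)\le c\,h(d)$ for a universal constant $c$, and the mass distribution principle yields $H^{h}(C_{a})\ge 1/c$, of the required order. Note that only monotonicity of $h$, and not its doubling property, enters this last comparison, which is precisely what keeps the constant independent of $h$.

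The step I expect to be the main obstacle is the one hidden in the phrase ``the level-$n$ intervals have length of order at least $d$, so $U$ meets boundedly many of them'': a priori the construction intervals at a fixed level need \emph{not} be comparable in size for a general summable sequence. The technical heart of the proof is therefore a length lemma asserting that the monotonicity of $\{a_{n}\}$ forces every construction interval at level $n$ to be comparable to the average $r_{n}/n$, with a constant independent of both $n$ and the sequence. I would establish this inductively along the construction, using that the left-to-right tree order assigns the larger remaining gaps to the larger current intervals, so that each sweep through a generation re-balances the interval lengths; it is this lemma that simultaneously justifies the interval count in the lower bound and pins down the universal constant (arranged to be $4$). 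The packing statement is dual: the same length lemma bounds an arbitrary $\delta $-packing from above and allows one to exhibit a near-optimal packing from below, with $\varlimsup $ replacing $\varliminf $ throughout.
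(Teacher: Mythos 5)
The paper does not actually prove this statement; it is imported verbatim from \cite[Prop.~4.1, Thm.~4.2]{GMS}, so there is no internal proof to compare yours against and I judge the proposal on its own terms. Your upper bound is correct and clean: tiling each of the $n$ residual intervals by intervals of the common length $r_n/n$ uses at most $2n$ pieces, giving $H^h(C_a)\le 2\varliminf_n n\,h(r_n/n)$, even better than the stated constant, and the dual packing estimate is routine.

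The lower bound, however, rests on a ``length lemma'' that is false: monotonicity of $\{a_n\}$ does \emph{not} force every construction interval at level $n$ to be comparable to the average $r_n/n$ with a universal constant. Counterexample: take $a_1=1$, $a_2=1/2$, $a_3=\varepsilon$ and $\sum_{j\ge4}a_j\le\varepsilon^2$. After deleting $a_1,a_2$ there are three intervals; the two pieces of the left half have total length at most $\varepsilon^{2}$, while $r_3/3\approx\varepsilon/3$, so two of the three intervals are smaller than the average by the arbitrary factor $3\varepsilon$. Repeating the pattern at all scales defeats the lemma for a single fixed sequence, and a dual example ($a_1=\dots=a_{n-1}=1$, $a_n=\varepsilon$, the rest summing to $\varepsilon^2$) produces one level-$n$ interval of length about $n$ times the average. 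The only comparability that monotonicity really gives --- and the only one the present paper uses, in proving Theorem~\ref{packing=pre} --- is the weaker lexicographic fact that every step-$k$ binary-tree interval is at least as long as every step-$(k+1)$ interval, so that $r_{2^k}/2^k$ separates consecutive generations. Without your lemma the step ``$U$ meets only boundedly many level-$n$ intervals'' collapses: the gaps separating level-$n$ intervals are only guaranteed to have length at least $a_{n-1}$, and $a_{n-1}$ can be much smaller than $r_n/n$ (for $a_j=1/(j\log^2 j)$ one gets $a_{n-1}\sim (r_n/n)/\log n$), so nothing prevents a set of diameter $r_n/n$ from meeting on the order of $\log n$ of them, and the mass-distribution estimate loses an unbounded factor. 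The correct arguments in \cite{BT}, \cite{CMMS}, \cite{GMS} do not compare interval lengths at all; they count \emph{gaps} --- an interval meeting two distinct level-$n$ construction intervals must wholly contain one of the $n-1$ largest gaps --- and run the bookkeeping through the $a_k$'s, which is where the constant $4$ arises. Your packing lower bound leans on the same false lemma and fails for the same reason.
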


This suggests that it will be of interest to study the following class of
functions:

\begin{definition}
We will say that an increasing, continuous function $h:(0,A]\rightarrow (0,\infty ]$, is \emph{associated} to
the sequence $a$ (or to the Cantor set $C_{a})$ if the sequence $%
\{h(r_{n}^{(a)}/n)\}$ is equivalent to the sequence $\{1/n\}.$
\end{definition}

One can check that any function, $h_{a},$ associated to the sequence $a$ is
a doubling function and thus belongs to $\mathcal{D}$.\ Indeed, if
\begin{equation*}
c_{1}h_{a}(\frac{r_{n}}{n})\leq \frac{1}{n}\leq c_{2}h_{a}(\frac{r_{n}}{n})%
\text{ for all }n\text{ }
\end{equation*}%
and $r_{n}/n\leq x\leq r_{(n-1)}/(n-1)$, then by monotonicity,
\begin{equation*}
h_{a}(2x)\leq h_{a}\left( \frac{2r_{n-1}}{n-1}\right) \leq h_{a}\left( \frac{%
r_{[(n-1)/2]}}{\left( n-1\right) /2}\right) \leq \frac{4}{c_{1}n}\leq \frac{%
4c_{2}}{c_{1}}h_{a}(x).
\end{equation*}

In the special case that $a=\{n^{-1/s}\}$ for some $0<s<1$, it is known (see
\cite{CMPS03}) that $C_{a}$ has Hausdorff dimension $s$ and $%
0<H^{h_{s}}(C_{a})<\infty $. One can easily see that any function associated
to $C_{a}$ is equivalent to $x^{s}$. This generalizes to arbitrary
associated (dimension) functions, so we can speak of `the' associated
dimension function.

\begin{lemma}
If $h$ is associated to the sequence $a$ and $g\in \mathcal{D}$, then $%
h\equiv g$ if and only if $g$ is also associated to $a$.
\end{lemma}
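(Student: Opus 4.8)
The plan is to prove the two implications separately, working directly from the definition that $h$ is associated to $a$ precisely when $\{h(r_n/n)\}\equiv\{1/n\}$ as sequences, i.e.\ there exist constants $c_1,c_2>0$ with $c_1\le nh(r_n/n)\le c_2$ for all $n$ (throughout I write $r_n=r_n^{(a)}$ and set $x_n:=r_n/n$).

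The forward implication is immediate. Suppose $h\equiv g$, so there are $k_1,k_2>0$ with $k_1 g(x)\le h(x)\le k_2 g(x)$ for every $x$ in the common domain. Evaluating at $x=x_n$ and combining with $c_1\le nh(x_n)\le c_2$ yields $c_1/k_2\le ng(x_n)\le c_2/k_1$, so $\{g(x_n)\}\equiv\{1/n\}$; since $g\in\mathcal{D}$ is in particular increasing and continuous, $g$ is associated to $a$. This direction requires nothing beyond substitution.

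The substance is the converse. Suppose both $h$ and $g$ are associated to $a$, so that $\{h(x_n)\}\equiv\{1/n\}\equiv\{g(x_n)\}$ and hence $h(x_n)\equiv g(x_n)$ along the points $x_n$. The first step is to record that $\{x_n\}$ is strictly decreasing to $0$: indeed $r_{n-1}=a_{n-1}+r_n>r_n$ while $n-1<n$, so $x_{n-1}>x_n$, and $x_n\to 0$ because $r_n\to 0$. Consequently the intervals $[x_n,x_{n-1}]$ tile $(0,x_1]=(0,r_1]$. The idea is then to promote the pointwise comparison at the $x_n$ to a comparison at every $x$ by interpolating across each tile using monotonicity. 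Concretely, given $x$ with $x_n\le x\le x_{n-1}$, monotonicity of $h$ and $g$ gives $h(x)\le h(x_{n-1})$ and $g(x)\ge g(x_n)$, whence, writing $d_1,d_2$ for the constants attached to $g$ and taking $n\ge 2$,
\[
\frac{h(x)}{g(x)}\le\frac{h(x_{n-1})}{g(x_n)}\le\frac{c_2/(n-1)}{d_1/n}=\frac{c_2}{d_1}\cdot\frac{n}{n-1}\le\frac{2c_2}{d_1}.
\]
The symmetric estimate bounds $g(x)/h(x)$, so $h/g$ is bounded above and below on $(0,r_1]$.

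The key mechanism here, and the step I expect to need the most care in writing out, is that consecutive sample points are \emph{uniformly comparable}: the gap between $x_{n-1}$ and $x_n$ costs only the harmless factor $n/(n-1)\le 2$, which is exactly what keeps the interpolation bounds uniform in $n$. Should the domain $(0,A]$ of the dimension functions extend beyond $r_1$, the remaining interval $[r_1,A]$ is compact and bounded away from $0$, so continuity and positivity of $h$ and $g$ make $h/g$ bounded there as well; combining the two ranges gives $h\equiv g$ and completes the equivalence.
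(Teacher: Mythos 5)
Your proof is correct and takes essentially the same approach as the paper's: both directions are handled the same way, with the converse proved by interpolating between the consecutive sample points $r_n/n$ via monotonicity of $h$ and $g$ and the uniform comparability of $1/n$ with $1/(n-1)$. The only differences are cosmetic — you additionally spell out the forward implication and the behaviour on $[r_1,A]$, which the paper dismisses as straightforward.
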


\begin{proof}
Suppose $g$ is associated to $a$. Let $b_{n}=r^{(a)}_{n}/n$ and $b_{n+1}\leq
x\leq b_{n}$. As $\left\{ h(b_{n})\right\} \equiv \left\{ g(b_{n})\right\}
\equiv \{1/n\}$ and $g$ is monotonic,
\begin{equation*}
c_{1}h(b_{n+1})\leq g(b_{n+1})\leq g(x)\leq g(b_{n})\leq c_{2}h(b_{n})
\end{equation*}%
for all $n$ and for suitable constants $c_{1},c_{2}$. Thus
\begin{equation*}
c_{1}^{\prime }\leq c_{1}\frac{h(b_{n+1})}{h(b_{n})}\leq \frac{g(x)}{h(x)}%
\leq c_{2}\frac{h(b_{n})}{h(b_{n+1})}\leq c_{2}^{\prime }.
\end{equation*}

The other implication is straightforward.
\end{proof}

\subsection{Packing dimension of Cantor sets $C_{a}$}

The pre-packing dimension and packing pre-measure always majorizes the
packing dimension and packing measure, and the strict inequality can hold.
For example, it is an easy exercise to see that the packing dimension of the
countable set $\{1/n\}_{n=1}^{\infty }$ is $0$, but the pre-packing
dimension equals $1/2.$ This phenomena does not happen for the Cantor sets $%
C_{a}.$ To prove this, we begin with a technical result which generalizes
\cite[Prop. 2.2]{Fa}.

\begin{lemma}
\label{pack} Let $\mu $ be a finite, regular, Borel measure and $h\in
\mathcal{D}$. If
\begin{equation*}
\varliminf_{r\rightarrow 0}\frac{\mu (B(x_{0},r))}{h(r)}<c\text{ for all }%
x_{0}\in E,
\end{equation*}%
then%
\begin{equation*}
P^{h}(E)\geq \frac{\mu (E)}{c}.
\end{equation*}
\end{lemma}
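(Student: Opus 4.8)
The plan is to prove the density lower bound $P^h(E)\geq \mu(E)/c$ by the standard Vitali-type covering/packing argument adapted to the packing setting. The core idea is that the hypothesis $\varliminf_{r\to 0}\mu(B(x_0,r))/h(r)<c$ for every $x_0\in E$ guarantees that, at arbitrarily small scales, each point carries a ball whose $h$-value is comparable to its $\mu$-mass, and these balls can be assembled into $\delta$-packings whose $h$-sums control $\mu(E)$ from below.

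First I would reduce from packing measure to packing pre-measure. Since $P^h$ is defined as the infimum over countable decompositions $E=\bigcup E_i$ of $\sum_i P_0^h(E_i)$, it suffices to show that for \emph{any} such decomposition we have $\sum_i P_0^h(E_i)\geq \mu(E)/c$. Because $\mu$ is countably additive and $\mu(E)\leq\sum_i\mu(E_i)$ (in fact equals it for a Borel partition), the whole statement follows once I establish the pre-measure bound $P_0^h(F)\geq \mu(F)/c$ for an arbitrary Borel subset $F\subseteq E$; summing over the $E_i$ then yields $\sum_i P_0^h(E_i)\geq\sum_i\mu(E_i)/c\geq\mu(E)/c$.

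The heart of the argument is thus the pre-measure estimate, and here I would use the hypothesis together with a Vitali covering lemma. Fix $\delta>0$ and a Borel set $F\subseteq E$. For each $x_0\in F$, the lim inf condition provides arbitrarily small radii $r$ (in particular $r<\delta/2$, so that the ball $B(x_0,r)$ has diameter less than $\delta$) with $\mu(B(x_0,r))>c^{-1}h(2r)$, or more precisely $\mu(B(x_0,r))\geq c^{-1}h(|B(x_0,r)|)$ up to the comparability afforded by $h$ being doubling. The collection of all such balls forms a Vitali cover of $F$. Applying the Vitali covering theorem, I extract a countable disjoint subfamily $\{B_i\}$ of these balls that still covers $\mu$-almost all of $F$, so that $\sum_i\mu(B_i)\geq\mu(F)$. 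This disjoint family is a $\delta$-packing of $F$ (the balls are centered at points of $F$ with diameter less than $\delta$), whence by the definition of pre-measure
\begin{equation*}
P_0^h(F)\geq\sum_i h(|B_i|)\geq\frac{1}{c}\sum_i\mu(B_i)\geq\frac{\mu(F)}{c}.
\end{equation*}
Since this holds for every $\delta$, taking the limit as $\delta\to 0^+$ preserves the inequality.

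The main obstacle I anticipate is the careful management of the lim inf hypothesis: it guarantees suitable radii exist for \emph{each} point but does not provide them uniformly, so one must verify that the selected balls genuinely form a Vitali cover (arbitrarily small admissible radii at every point) and that the doubling property of $h$ correctly converts the bound $\mu(B(x_0,r))/h(r)<c$ into the diameter-based estimate $h(|B(x_0,r)|)\leq c'\mu(B(x_0,r))$ needed for the packing sum. A secondary technical point is ensuring the Vitali-selected family covers $F$ up to $\mu$-null sets rather than merely a subset; this is exactly where the regularity of $\mu$ is used, allowing the null residual to be absorbed without affecting the lower bound on $\sum_i\mu(B_i)$.
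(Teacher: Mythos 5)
Your proposal follows exactly the paper's argument: reduce to the pre-measure bound on each piece of the decomposition via subadditivity of $\mu$, use the lim inf hypothesis to produce a fine (Vitali) cover of $F$ by small balls whose $\mu$-mass is controlled by their $h$-value, extract a disjoint subfamily covering $\mu$-almost all of $F$, and read off the packing sum. The final displayed chain $P_0^h(F)\geq \sum_i h(|B_i|)\geq c^{-1}\sum_i \mu(B_i)\geq \mu(F)/c$ is precisely the paper's conclusion.

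However, the prose in which you extract the pointwise estimate is inverted, and as written it contradicts that display. The hypothesis $\varliminf_{r\to 0}\mu(B(x_0,r))/h(r)<c$ yields arbitrarily small $r$ with $\mu(B(x_0,r))<c\,h(r)$, an \emph{upper} bound on the mass; since $h$ is increasing this gives $\mu(B(x_0,r))<c\,h(2r)=c\,h(|B(x_0,r)|)$, i.e.\ $h(|B_i|)\geq c^{-1}\mu(B_i)$, which is exactly what the packing sum needs. You instead select balls satisfying $\mu(B(x_0,r))>c^{-1}h(2r)$ and later assert the ``diameter-based estimate $h(|B(x_0,r)|)\leq c'\mu(B(x_0,r))$''; both of these point the wrong way and would only give $\sum h(|B_i|)\leq c\sum\mu(B_i)$, which proves nothing. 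Relatedly, no doubling of $h$ is needed to pass from radius to diameter here---monotonicity alone suffices, because the inequality to be preserved is $\mu(B)<c\,h(\cdot)$ and $h(r)\leq h(2r)$; doubling would be required only for the reversed inequality, which is not the one in play. Once these directions are corrected, your proof coincides with the paper's.
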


\begin{proof}
We need to prove that for any partition $\cup _{i=1}^{\infty }E_{i}=E$, we
have $\sum_{i=1}^{\infty }P_{0}^{h}(E_{i})\geq \mu (E)/c.$ Since $\mu
(E)\leq \sum \mu (E_{i}),$ it is enough to prove that $P_{0}^{h}(E_{i})\geq
\mu (E_{i})/c$ for each $i$.

Without loss of generality assume $E_{i}=E$ and we will show that for each $%
\delta >0$,%
\begin{equation*}
P_{0,\delta }^{h}(E)\equiv \sup \left\{ \sum h(|B_{i}|):\{B_{i}\}_{i}\ \text{is a $\delta $-packing of $E$}\right\} \leq
\frac{\mu (E)}{c}.
\end{equation*}

Consider the collection of balls, $B(x,r)$, with $x\in E$ and $\mu
(B(x,r))<ch(r)$, where $r\leq \delta $. The hypothesis ensures that for each
$x\in E$ there are balls $B(x,r)$ in the collection, with $r$ arbitrarily
small. By the Vitali covering lemma, there are disjoint balls from this
collection, $\{B_{i}\}_{i=1}^{\infty }$, with $\mu (E\setminus \cup B_{i})=0$%
. Thus
\begin{equation*}
P_{0,\delta }^{h}(E)\geq \sum h(|B_{i}|)\geq \frac{1}{c}\sum \mu (B_{i})=\frac{%
1}{c}\mu (\cup B_{i})=\frac{1}{c}\mu (E).
\end{equation*}
\end{proof}

We now specialize to the case of Cantor sets, $C_{a},$ associated to a
non-increasing, summable sequence $a=\{a_{j}\}$. If we use the notation $%
\{I_{j}^{(k)}\}_{1\leq j\leq 2^{k}}$, for the (remaining) intervals at step $k$
in the Cantor set construction, then the sequence of lengths of these
intervals, $\{|I_{j}^{(k)}|\}_{(k,j)}$, with $1\leq j\leq 2^{k},k\geq 1$ is
(lexicographically) non-increasing. Hence the length of any Cantor interval
of step $k$ is at least the length of any Cantor interval of step $k+1$.
This observation, together with the lemma above, is the key idea needed to
prove that infinite (or positive) pre-packing measure implies infinite
(respectively, positive) packing measure for the sets $C_{a}$. Of course,
the other implication holds for all sets.

\begin{theorem}
\label{packing=pre}Suppose $a=\{a_{j}\}$ is a summable, non-increasing
sequence with associated Cantor set $C_{a}$ and $h\in \mathcal{D}$. If $%
P_{0}^{h}(C_{a})=\infty $, then $P^{h}(C_{a})=\infty ,$ while if $%
P_{0}^{h}(C_{a})>0,$ then $P^{h}(C_{a})>0.$
\end{theorem}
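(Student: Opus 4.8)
The plan is to apply the mass-distribution bound of Lemma~\ref{pack} to the natural uniform measure on $C_a$, reducing both implications to a single estimate on the lower $h$-density. Let $\mu$ be the Borel probability measure determined by $\mu(I_j^{(k)})=2^{-k}$ for each of the $2^k$ intervals of step $k$; since every step-$k$ interval splits into exactly two step-$(k+1)$ intervals this is consistent, and as a finite Borel measure on the compact set $C_a$ it is regular. Because $P^h\le P_0^h$ always holds, it suffices to prove the two stated one-way implications, and by Lemma~\ref{pack} it is enough to bound $\varliminf_{r\to 0}\mu(B(x_0,r))/h(r)$ from above, uniformly in $x_0\in C_a$, by a constant multiple of $1/P_0^h(C_a)$ (read as $0$ when $P_0^h(C_a)=\infty$).

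The crux is a geometric estimate, which is exactly where the monotonicity of the interval lengths enters. Fix $x_0\in C_a$ and let $r_k=|I^{(k)}(x_0)|$ be the length of the step-$k$ interval containing $x_0$. I claim
\[
\mu\bigl(B(x_0,r_k)\bigr)\le 8\cdot 2^{-k}.
\]
Indeed, $B(x_0,r_k)$ has diameter $2r_k$, and by the observation preceding the theorem every step-$(k-1)$ interval has length at least every step-$k$ length, in particular at least $r_k$. If $J_1,\dots,J_N$ are the step-$(k-1)$ intervals meeting $B(x_0,r_k)$, listed from left to right, then all but the two extreme ones lie inside $B(x_0,r_k)$; being disjoint and each of length $\ge r_k$, at most two fit in a set of length $2r_k$, so $N\le 4$, and each carries mass $2^{-(k-1)}$. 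The main obstacle is precisely to see that a ball at this scale meets only a bounded number of intervals: the monotonicity of the lengths across consecutive steps is what rules out a cluster of many short intervals near $x_0$, and it is the reason the conclusion holds for the sets $C_a$ but not for arbitrary sets.

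It remains to convert this into a statement about the tails $r_n$. Using monotonicity once more, $r_k\ge \max_j|I_j^{(k+1)}|\ge r_{2^{k+1}}/2^{k+1}$, the average step-$(k+1)$ length; hence, writing $m=2^{k+1}$ and using that $h$ is increasing,
\[
\frac{\mu(B(x_0,r_k))}{h(r_k)}\le \frac{8\cdot 2^{-k}}{h(r_{m}/m)}=\frac{16}{m\,h(r_m/m)}.
\]
Taking the liminf over $k$ bounds the lower density by $16$ divided by $\varlimsup_{k}2^{k+1}h(r_{2^{k+1}}/2^{k+1})$. A short comparison shows this dyadic limsup differs from $\varlimsup_n n\,h(r_n/n)$ by at most a factor $2$ (for $2^k\le n<2^{k+1}$ one has $r_n/n\le r_{2^k}/2^k$ and $n<2^{k+1}$, whence $n\,h(r_n/n)\le 2\cdot 2^k h(r_{2^k}/2^k)$), so by Theorem~\ref{hmeasures} it is comparable to $P_0^h(C_a)$. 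Feeding this uniform bound on the lower density into Lemma~\ref{pack} settles both cases: if $P_0^h(C_a)=\infty$ the lower density is $0$ at every point, so $P^h(C_a)\ge \mu(C_a)/c=1/c$ for every $c>0$ and thus $P^h(C_a)=\infty$; if $P_0^h(C_a)>0$ the lower density is at most a constant multiple of $1/P_0^h(C_a)$, which (after an arbitrarily small enlargement of the constant to obtain the strict inequality required by Lemma~\ref{pack}) gives $P^h(C_a)\ge c'\,P_0^h(C_a)>0$.
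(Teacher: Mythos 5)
Your proof is correct and follows essentially the same route as the paper: the uniform Cantor measure with $\mu(I_j^{(k)})=2^{-k}$, a bounded count of same-generation intervals meeting a small ball (exploiting the monotonicity of the interval lengths across steps), the comparison of an interval length with the average $r_{2^{k+1}}/2^{k+1}$, the dyadic-versus-full limsup reduction, Theorem~\ref{hmeasures}, and finally Lemma~\ref{pack}. The only substantive variation is that by testing the lower density only at the radii $r=|I^{(k)}(x_0)|$ and counting step-$(k-1)$ intervals (at most $4$, versus the paper's at most $5$ step-$k$ intervals chosen by minimality) you avoid invoking the doubling property of $h$, which the paper needs to pass from $h(r)$ to $h(2r)$; do note, though, that your symbol $r_k$ for $|I^{(k)}(x_0)|$ collides with the paper's notation $r_n$ for the tails.
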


\begin{proof}
Let $b_{n}=r^{(a)}_{n}/n$. By Theorem \ref{hmeasures}, $P_{0}^{h}(C_{a})=\infty $
implies $\varlimsup nh(b_{n})=\infty $ and $P_{0}^{h}(C_{a})>0$ implies $%
\varlimsup nh(b_{n})>0$. Since $h$ is increasing, if $2^{k}\leq n\leq
2^{k+1} $ then
\begin{equation*}
nh(b_{n})\leq nh(b_{2^{k}})\leq 2^{k+1}h(b_{2^{k}}).
\end{equation*}%
Therefore, $\varlimsup_{k\rightarrow \infty }2^{k}h(b_{2^{k}})=\infty $ in
the first case, and is (strictly) positive in the second.

The tail term, $r_{2^{k}}^{(a)},$ is the sum of gaps created at level $k+1$
or later, which in turn is equal to the sum of the lengths of the step $k$
intervals. Thus $r_{2^{k}}/2^{k}$, the average length of a step $k$
interval, is at most the length of the shortest interval of step $k-1$ and
at least the length of longest interval of step $k+1$. As $h$ is increasing,
\begin{equation*}
h(|I_{1}^{(k)}|)\geq h(b_{2^{k}})\geq h(| I_{1}^{(k+1)}|).
\end{equation*}

Let $\mu $ be the (uniform) Cantor measure on $C_{a}$ (constructed as a
limiting process, assigning at each step $k$ the measure $\mu _{k}$ such
that $\mu _{k}(I_{j}^{k})=2^{-k}$ and then taking the weak*-limit).

Fix $x_{0}\in C_{a}$ and $r>0$. Suppose $k$ is the minimal integer such that
$B(x_{0},r)$ contains a step $k$ interval. The minimality of $k$ ensures
that $B(x_{0},r)$ can intersect at most 5 step $k$ intervals. Thus $\mu
(B(x_{0},r))\leq 5\, 2^{-k}$. \ Also, if $I_{j}^{(k)}$ is a step $k$
interval contained in $B(x_{0},r)$ then $2r\geq |I_{j}^{(k)}|$. Since $h$
is a doubling function,
\begin{equation*}
h(r)\geq \tau\, h(2r)\geq \tau\, h(|I_{j}^{(k)}|)\geq \tau\, h(b_{2^{k+1}})
\end{equation*}%
for some $\tau >0$.

Combining these facts, we see that
\begin{equation*}
\frac{\mu (B(x_{0},r))}{h(r)}\leq \frac{5\cdot 2^{-k}}{\tau\, h(b_{2^{k+1}})}=%
\frac{10}{\tau\, 2^{k+1}h(b_{2^{k+1}})}.
\end{equation*}%
Thus if $P_{0}^{h}(C_{a})=\infty ,$ then
\begin{equation*}
\varliminf_{r\rightarrow 0}\frac{\mu (B(x_{0},r))}{h(r)}=0,
\end{equation*}%
while if $P_{0}^{h}(C_{a})>0$, then%
\begin{equation*}
c_{0}=\varliminf_{r\rightarrow 0}\frac{\mu (B(x_{0},r))}{h(r)}<\infty .
\end{equation*}%
Applying Lemma~\ref{pack} we conclude that in the first case, $%
P^{h}(C_{a})\geq \mu (C_{a})/c$  for every $c>0$ and therefore $%
P^{h}(C_{a})=\infty ,$ while $P^{h}(C_{a})\geq \mu (C_{a})/c_{0}>0$ in the
second case.
\end{proof}

\begin{corollary}
\label{P=P0}(i) For any dimension function $h$, $P_{0}^{h}(C_{a})=0$ (or $%
\infty )$ if and only if $P^{h}(C_{a})=0$ (resp. $\infty )$ and $%
0<P_{0}^{h}(C_{a})<\infty $ if and only if $0<P^{h}(C_{a})<\infty .$

(ii) The packing and pre-packing dimensions of the Cantor set $C_{a}$
coincide.
\end{corollary}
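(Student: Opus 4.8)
The plan is to extract part (i) purely formally from two ingredients already in hand: the universal inequality $P^{h}(E)\le P_{0}^{h}(E)$ noted in Section~\ref{sec-measures}, and the two implications of Theorem~\ref{packing=pre}, namely $P_{0}^{h}(C_{a})=\infty\Rightarrow P^{h}(C_{a})=\infty$ and $P_{0}^{h}(C_{a})>0\Rightarrow P^{h}(C_{a})>0$. Part (ii) will then follow by applying part (i) to the power functions $h=h_{s}$ and comparing the critical-index formulas for the two notions of dimension.

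For part (i) I would verify the three equivalences one at a time. For $P_{0}^{h}(C_{a})=0\iff P^{h}(C_{a})=0$: the forward direction is immediate from $P^{h}\le P_{0}^{h}$, while the reverse is the contrapositive of the implication $P_{0}^{h}(C_{a})>0\Rightarrow P^{h}(C_{a})>0$. For $P_{0}^{h}(C_{a})=\infty\iff P^{h}(C_{a})=\infty$: the forward direction is exactly the first implication of Theorem~\ref{packing=pre}, and the reverse follows from $P_{0}^{h}\ge P^{h}$. Finally, for the middle case $0<P_{0}^{h}(C_{a})<\infty\iff 0<P^{h}(C_{a})<\infty$, I would combine the two previous equivalences: positivity transfers in both directions (using $P_{0}^{h}>0\Rightarrow P^{h}>0$ one way and $P_{0}^{h}\ge P^{h}$ the other), and finiteness transfers in both directions (using $P^{h}\le P_{0}^{h}$ one way and the contrapositive of $P_{0}^{h}=\infty\Rightarrow P^{h}=\infty$ the other).

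For part (ii), I would invoke the formulas
\begin{equation*}
\dim_{P}(C_{a})=\sup\{s:h_{s}\in\mathcal{P}_{\infty}^{C_{a}}\}=\inf\{s:h_{s}\in\mathcal{P}_{0}^{C_{a}}\}
\end{equation*}
and
\begin{equation*}
\dim_{P_{0}}(C_{a})=\sup\{s:P_{0}^{h_{s}}(C_{a})=\infty\}=\inf\{s:P_{0}^{h_{s}}(C_{a})=0\}.
\end{equation*}
Since each $h_{s}\in\mathcal{D}$, part (i) applies with $h=h_{s}$ and gives $P_{0}^{h_{s}}(C_{a})=\infty\iff P^{h_{s}}(C_{a})=\infty$ together with $P_{0}^{h_{s}}(C_{a})=0\iff P^{h_{s}}(C_{a})=0$. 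Hence the index set defining the supremum for $\dim_{P_{0}}$ coincides with that defining the supremum for $\dim_{P}$, and likewise for the two infima, so $\dim_{P}(C_{a})=\dim_{P_{0}}(C_{a})$.

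The substance of the argument lies entirely in Theorem~\ref{packing=pre}; the corollary itself is a bookkeeping exercise, so I do not anticipate a genuine obstacle. The only points needing care are to apply the universal inequality and the theorem in complementary logical directions for each of the three equivalences, and to observe that part (i) is legitimately applicable in part (ii) precisely because each $h_{s}$ is a dimension function.
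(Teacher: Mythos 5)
Your argument is correct and matches what the paper intends: the corollary is stated there without a separate proof precisely because it follows, exactly as you describe, from the universal inequality $P^{h}\leq P_{0}^{h}$ combined with the two implications of Theorem \ref{packing=pre}, and part (ii) follows by specializing to $h=h_{s}$ in the critical-index formulas. Your bookkeeping of the three equivalences and the identification of the index sets for the suprema and infima is complete and accurate.
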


Theorem \ref{hmeasures} was used in \cite{GMS} to give sufficient conditions
for two dimension functions to be equivalent. Together with Theorem \ref%
{packing=pre}, we can obtain sufficient conditions for comparability.

\begin{proposition}
\label{charlessthan}Suppose $f,$ $h\in \mathcal{D}$. \ If $H^{h}(C_{a})>0$
and $P^{f}(C_{a})<\infty $, then $f\preceq h$.
\end{proposition}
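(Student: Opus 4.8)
The plan is to transfer both hypotheses to the sample points $b_{n}:=r_{n}^{(a)}/n$ by means of Theorem~\ref{hmeasures}, to compare $f$ and $h$ at those points, and then to propagate the comparison to every $x$ using only monotonicity.

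First I would apply Theorem~\ref{hmeasures}. Since $H^{h}(C_{a})>0$, we obtain $\varliminf_{n}nh(b_{n})\geq H^{h}(C_{a})/4>0$, so there exist $c_{1}>0$ and $N$ with $h(b_{n})\geq c_{1}/n$ for all $n\geq N$. On the packing side, $P^{f}(C_{a})<\infty$ forces $P_{0}^{f}(C_{a})<\infty$ by Corollary~\ref{P=P0}(i), and hence $\varlimsup_{n}nf(b_{n})\leq 4P_{0}^{f}(C_{a})<\infty$; after enlarging $N$ if necessary we get $f(b_{n})\leq c_{2}/n$ for all $n\geq N$ and some constant $c_{2}$. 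Combining the two estimates gives $f(b_{n})\leq c_{2}/n\leq (c_{2}/c_{1})\,h(b_{n})$ for every $n\geq N$, so the desired comparison already holds along the sequence $\{b_{n}\}$.

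The substantive step is to pass from the discrete points $b_{n}$ to an arbitrary $x$. Here I would use that $\{b_{n}\}$ is strictly decreasing to $0$ (because $r_{n}^{(a)}$ is strictly decreasing and $n$ is increasing), so every sufficiently small $x$ satisfies $b_{n+1}\leq x\leq b_{n}$ for some $n\geq N$. Since $f$ is increasing, $f(x)\leq f(b_{n})\leq c_{2}/n$; since $h$ is increasing, $h(x)\geq h(b_{n+1})\geq c_{1}/(n+1)$. The only mismatch is the index $n$ appearing on the $f$-side versus $n+1$ on the $h$-side, and this is harmless because $1/n\leq 2/(n+1)$, whence $f(x)\leq c_{2}/n\leq 2c_{2}/(n+1)\leq (2c_{2}/c_{1})\,h(x)$. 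I would emphasize that this argument requires only monotonicity and, somewhat pleasantly, makes no use of the doubling property.

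This establishes the bound on $(0,b_{N}]$; on the remaining compact range $[b_{N},A]$ the functions $f$ and $h$ are continuous with $h$ bounded below by $h(b_{N})>0$, so $f/h$ is bounded there and the constant can be enlarged to cover all $x$, yielding $f\preceq h$ (this is the same way the $\preceq$-bounds are concluded elsewhere in the paper). I expect the propagation step to be the only place needing care, precisely because the ratios $b_{n}/b_{n+1}$ of consecutive sample points need not be bounded; the resolution is to estimate $f$ and $h$ at \emph{opposite} endpoints of each sandwiching interval, which sidesteps that difficulty entirely.
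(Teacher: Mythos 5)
Your proof is correct and follows essentially the same route as the paper's: both transfer the hypotheses to the sample points $b_{n}=r_{n}^{(a)}/n$ via Theorem~\ref{hmeasures} (using Theorem~\ref{packing=pre} to pass from $P^{f}$ to $P_{0}^{f}$), and then sandwich an arbitrary $x$ between consecutive $b_{n}$'s, estimating $f$ at the larger endpoint and $h$ at the smaller one so that only monotonicity and the harmless index shift $n\mapsto n\pm 1$ are needed. Your explicit treatment of the range $[b_{N},A]$ is a small additional care the paper leaves implicit, but the argument is the same.
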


\begin{proof}
Let $b_{n}=r_{n}^{(a)}/n$. Since $H^{h}(C_{a})>0$, Theorem \ref{hmeasures}
implies that there exists a constant $c_{1}>0$ such that for all
sufficiently large $n$, $h(b_{n})\geq c_{1}/n$. Similarly, the assumption
that $P^{f}(C_{a})<\infty $ implies $P_{0}^{f}(C_{a})<\infty $ by our
previous theorem, and therefore there is a constant $c_{2}<\infty $ with $%
f(b_{n})\leq c_{2}/n$.

Now suppose $b_{n}\leq x<b_{n-1}$. By monotonicity, $f(x)\leq f(b_{n-1})$
and $h(x)\geq h(b_{n})$. Hence
\begin{equation*}
\frac{f(x)}{h(x)}\leq \frac{c_{2}n}{c_{1}(n-1)}\leq 2\frac{c_{2}}{c_{1}}%
<\infty
\end{equation*}%
and therefore $f\preceq h$.
\end{proof}

The next result was obtained in \cite[Thm. 4.4]{GMS} with packing
pre-measure replacing packing measure.

\begin{corollary}
\label{equivalent} Suppose $f,h\in \mathcal{D}$. If $0<H^{g}(C_{a})\leq
P^{g}(C_{a})<\infty $ for $g$ $=f$ and $h$, then $f\equiv h$.
\end{corollary}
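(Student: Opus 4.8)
The plan is to obtain this statement directly from Proposition~\ref{charlessthan} by applying it twice, with the roles of $f$ and $h$ interchanged between the two applications. The point is that the hypothesis $0<H^g(C_a)\leq P^g(C_a)<\infty$, imposed for both $g=f$ and $g=h$, delivers exactly the two ingredients that Proposition~\ref{charlessthan} consumes: from the $g=f$ instance we read off $H^f(C_a)>0$ and $P^f(C_a)<\infty$, and from the $g=h$ instance we read off $H^h(C_a)>0$ and $P^h(C_a)<\infty$. No further analytic input is needed beyond what is already available.

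Concretely, I would first invoke Proposition~\ref{charlessthan} with its ``$h$'' taken to be our $h$ and its ``$f$'' taken to be our $f$. The hypotheses $H^h(C_a)>0$ and $P^f(C_a)<\infty$ are in force, so the proposition yields $f\preceq h$. Next I would apply the identical proposition with the two functions swapped, i.e.\ with ``$h$'' now our $f$ and ``$f$'' now our $h$; the relevant hypotheses $H^f(C_a)>0$ and $P^h(C_a)<\infty$ again hold, giving $h\preceq f$. Combining $f\preceq h$ with $h\preceq f$ is precisely the definition of $f\equiv h$, which is the desired conclusion.

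I expect essentially no obstacle here: all of the real work has already been absorbed into Proposition~\ref{charlessthan}, which in turn rests on the two-sided estimates of Theorem~\ref{hmeasures} together with the equality of packing and pre-packing measure from Theorem~\ref{packing=pre}. The only things requiring a modicum of care are purely organizational—keeping straight which of the four inequalities in the hypothesis feeds which slot of the proposition in each of the two applications, and noting that the passage from $P^g(C_a)<\infty$ to $P_0^g(C_a)<\infty$ (via Theorem~\ref{packing=pre}) is already internal to Proposition~\ref{charlessthan} and need not be repeated. Thus the corollary is a short formal consequence rather than a result demanding a new argument.
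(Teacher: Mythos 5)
Your proof is correct and is exactly the argument the paper intends: the corollary is stated immediately after Proposition~\ref{charlessthan} with no separate proof, precisely because it follows by applying that proposition twice with the roles of $f$ and $h$ interchanged, as you do. Nothing further is needed.
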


\section{Classification of Cantor Sets}

An immediate consequence of Theorem \ref{hmeasures} and \ref{packing=pre} is
the following elegant description of the dimension partition for Cantor sets
$C_{a}$.

\begin{theorem}
Suppose $a=\{a_{n}\}$ is a non-increasing, summable sequence of positive
real numbers. Then%
\begin{align*}
\mathcal{H}_{\alpha }^{C_{a}}& =\left\{ h\in \mathcal{D}:\varliminf_{n%
\rightarrow \infty }nh(\frac{r_{n}^{(a)}}{n})=\alpha \right\} \text{ for }%
\alpha =0,\infty , \\
\mathcal{H}_{1}^{C_{a}}& =\left\{ h\in \mathcal{D}:0<\varliminf_{n%
\rightarrow \infty }nh(\frac{r_{n}^{(a)}}{n})<\infty \right\} ,
\end{align*}%
\newline
and similarly for $\mathcal{P}_{\beta }^{C_{a}},\beta =0,1,\infty ,$ but
with lim sup replaced by lim inf. (See Table~\ref{tabla}.)
\begin{table}[tbp]
\begin{center}
\begin{tabular}{c|c|c|c|}
\multicolumn{1}{c}{\ } & \multicolumn{1}{c}{$P_0$} & \multicolumn{1}{c}{$P_1$%
} & \multicolumn{1}{c}{$P_{\infty}$} \\*[1mm] \cline{2-4}
$H_0$ & {\rule[-3mm]{0pt}{22pt}%
\begin{tabular}{l}
\  \\
$0$-$h$ Hausdorff measure \\
$0$-$h$ Packing measure \\
\
\end{tabular}%
} & {\rule[-3mm]{0pt}{22pt}%
\begin{tabular}{l}
$0$-$h$ Hausdorff measure \\
$h$-Packing set%
\end{tabular}%
} & {\rule[-3mm]{0pt}{22pt}%
\begin{tabular}{l}
$0$-$h$ Hausdorff measure \\
$\infty$-$h$ Packing measure%
\end{tabular}%
} \\ \cline{2-4}
$H_1$ &  & {\rule[-3mm]{0pt}{22pt}$h$-regular set} & {\rule[-3mm]{0pt}{22pt}%
\begin{tabular}{l}
\  \\
$h$-Hausdorff set \\
$\infty$-$h$ Packing measure \\
\
\end{tabular}%
} \\ \cline{3-4}
$H_{\infty}$ & \multicolumn{2}{l|}{\rule[-3mm]{0pt}{22pt}\ } & {%
\rule[-3mm]{0pt}{22pt}%
\begin{tabular}{l}
\  \\
$\infty$-$h$ Hausdorff measure \\
$\infty$-$h$ Packing measure \\
\
\end{tabular}%
} \\ \cline{2-4}
\end{tabular}%
\end{center}
\caption{Classification of functions in $\mathcal{D}$ for $C_{a}$}
\label{tabla}
\end{table}
\end{theorem}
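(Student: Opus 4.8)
The plan is to read off the classification directly from the quantitative estimates already in hand, namely Theorem~\ref{hmeasures} together with its packing analogue and Corollary~\ref{P=P0}. No new construction is required; the task is purely to translate the two-sided sandwich bounds into the three-way dichotomy $\{0,\text{ finite positive},\infty\}$ that defines the sets $\mathcal{H}_{\alpha}^{C_a}$ and $\mathcal{P}_{\beta}^{C_a}$.

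For the Hausdorff sets I would set $L=\varliminf_{n\rightarrow\infty}nh(r_n^{(a)}/n)$ and invoke the bound
\[
\tfrac{1}{4}L\leq H^{h}(C_a)\leq 4L
\]
from Theorem~\ref{hmeasures}. Since the constants $1/4$ and $4$ are strictly positive and finite, $H^{h}(C_a)$ and $L$ vanish together, are infinite together, and lie strictly between $0$ and $\infty$ together. Matching these three cases against the definitions of $\mathcal{H}_0^{C_a}$, $\mathcal{H}_\infty^{C_a}$ and $\mathcal{H}_1^{C_a}$ gives precisely the three stated descriptions.

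For the packing sets the argument carries one extra step. The lim sup version of Theorem~\ref{hmeasures} first yields the identical three-way characterization for the \emph{pre}-packing measure $P_0^{h}(C_a)$ in terms of $\varlimsup_{n\rightarrow\infty}nh(r_n^{(a)}/n)$ (the statement's phrase ``lim sup replaced by lim inf'' should read ``lim inf replaced by lim sup''). To pass from the pre-measure to the genuine packing measure I would apply Corollary~\ref{P=P0}(i): for the sets $C_a$ the set functions $P_0^{h}$ and $P^{h}$ are simultaneously zero, simultaneously finite and positive, and simultaneously infinite. Substituting $P^{h}$ for $P_0^{h}$ in each of the three cases then produces the descriptions of $\mathcal{P}_0^{C_a}$, $\mathcal{P}_1^{C_a}$ and $\mathcal{P}_\infty^{C_a}$.

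The only point carrying genuine content is this transfer from pre-packing to packing measure, which for arbitrary sets can fail, as the text's example $\{1/n\}$ illustrates. This is exactly where the special structure of $C_a$ intervenes, having already been exploited in Theorem~\ref{packing=pre}: the lexicographic monotonicity of the Cantor interval lengths forces the local density of the uniform Cantor measure to track $h(b_{2^{k}})$, so that infinite (respectively positive) pre-packing measure upgrades to infinite (respectively positive) packing measure. Granting Corollary~\ref{P=P0}, the theorem is then immediate, and the six cells of Table~\ref{tabla} are simply the six admissible pairings $\alpha\leq\beta$ of the Hausdorff and packing values.
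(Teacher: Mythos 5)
Your argument is correct and is exactly the paper's route: the paper presents this theorem as an immediate consequence of Theorem~\ref{hmeasures} (for the Hausdorff and pre-packing three-way dichotomies) together with Theorem~\ref{packing=pre} via Corollary~\ref{P=P0} (to replace $P_0^h$ by $P^h$), which is precisely what you spell out. You are also right that the phrase ``lim sup replaced by lim inf'' in the statement is a typo for the reverse substitution.
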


In particular, note that $C_{a}$ is $h$-regular if and only if $h$ is a
dimension function associated to $a$.

For the computation of dimensions the relevant behaviour of a sequence is
that of its tail, thus we introduce the following definitions.

\begin{definition}\label{weak-tail}
(a) We say two sequences, $a,b,$ are \emph{tail-equivalent} if the sequences
of tails, $\{r_{n}^{(a)}\},$ $\{r_{n}^{(b)}\},$ are equivalent.

(b) We say $a$ and $b$ are \emph{weak tail-equivalent }if there are positive
integers $j,k$ such that $r_{n}^{(a)}\geq r_{jn}^{(b)}/j$ and $%
r_{n}^{(b)}\geq r_{kn}^{(a)}/k$ for all $n$.
\end{definition}

Obviously, if $a$ is tail-equivalent to $b,$ then the dimension partitions
of $C_{a}$ and $C_{b}$ are the same. Furthermore, equivalence of sequences
implies tail-equivalence implies weak tail-equivalence, however neither
implication is reversible as the next example illustrates.

\begin{example}
(a) Let $k_{1}=1$ and inductively define $n_{j}=2^{k_{j}+1}-2$ and $%
k_{j+1}=n_{j}+k_{j}+1$. Set $a_{k_{j}}=2^{-k_{j}}$, $a_{n}=2^{-(2k_{j}+1)}$
for $n=k_{j}+1,\dots ,n_{j}+k_{j}$ and set $b_{n}=2^{-2k_{j}}$ for $%
n=k_{j}+1,\dots ,n_{j}+k_{j}.$ Since $a_{k_{j}}/b_{k_{j}}=2^{k_{j}}$, $\{{%
a_{n}\}}$ is not equivalent to $\{{b_{n}\}.}$ However one can easily check
that $\{{a_{n}\}}$ is tail equivalent to $\{{b_{n}\}}$.

(b) The sequence $\{e^{-n}\}$ is weak tail-equivalent to $\{e^{-2n}\}$, but
not tail-equivalent.
\end{example}

We are now ready to state and prove our classification result. For
notational ease we write $\mathcal{H}_{\alpha }^{a},$ $\mathcal{P}_{\beta
}^{a}$ rather than $\mathcal{H}_{\alpha }^{C_{a}},$ $\mathcal{P}_{\beta
}^{C_{a}}.$

\begin{theorem}\label{main}
Suppose $C_{a}$ and $C_{b}$ are Cantor sets associated to non-increasing,
summable sequences, $a,b,$ of positive numbers. The following are equivalent:

\begin{enumerate}
\item The dimension function associated  to $a$ is equivalent to the
dimension function associated  to $b;$

\item $C_{a}$ and $C_{b}$ are $h$-regular for precisely the same set of
dimension functions $h$;

\item The dimension partitions associated with $C_{a}$ and $C_{b}$ coincide,
i.e., $\mathcal{H}_{\alpha }^{a}\cap \mathcal{P}_{\beta }^{a}=\mathcal{H}%
_{\alpha }^{b}\cap \mathcal{P}_{\beta }^{b}$ for all $\alpha \leq \beta \in
\{0,1,\infty \};$

\item The sequence $a$ is weak tail-equivalent to the sequence $b$.
\end{enumerate}
\end{theorem}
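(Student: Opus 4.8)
The plan is to prove the cyclic chain $(1)\Rightarrow(4)\Rightarrow(3)\Rightarrow(2)\Rightarrow(1)$, which establishes all four equivalences at once. Throughout I write $t_n^{(a)}=r_n^{(a)}/n$ and $t_n^{(b)}=r_n^{(b)}/n$, and I let $h_a,h_b$ denote the associated dimension functions, which exist and lie in $\mathcal{D}$ by the results quoted earlier. The two routine links are $(3)\Rightarrow(2)$ and $(2)\Rightarrow(1)$. The former is immediate, since $h$-regularity is exactly membership in the single cell $\mathcal{H}_1\cap\mathcal{P}_1$, so if all cells agree then in particular the regular functions agree. For the latter I use the Lemma characterizing associated functions together with the remark that $C_a$ is $h$-regular precisely when $h$ is associated to $a$: that Lemma shows the set of dimension functions for which $C_a$ is regular is exactly the equivalence class of $h_a$, and similarly that of $h_b$ for $C_b$; if these two sets coincide then $h_a\equiv h_b$, which is $(1)$.

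The first substantive step is $(1)\Rightarrow(4)$. Starting from $h_a\equiv h_b$, I combine this with the defining relations $h_a(t_n^{(a)})\asymp 1/n$ and $h_b(t_n^{(b)})\asymp 1/n$ to deduce $h_b(t_n^{(a)})\asymp 1/n$ as well; in particular there are constants with $h_b(t_n^{(a)})\ge c_1/n$ and $h_b(t_m^{(b)})\le C_2/m$ for all indices. Choosing any integer $j\ge C_2/c_1$ then forces $h_b(t_{jn}^{(b)})\le C_2/(jn)\le c_1/n\le h_b(t_n^{(a)})$, and since $h_b$ is strictly increasing (hence injective and order-reflecting) this yields $t_{jn}^{(b)}\le t_n^{(a)}$, i.e. $r_n^{(a)}\ge r_{jn}^{(b)}/j$. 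Exchanging the roles of $a$ and $b$ produces an integer $k$ with $r_n^{(b)}\ge r_{kn}^{(a)}/k$, which is precisely weak tail-equivalence (Definition \ref{weak-tail}).

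Next I would prove $(4)\Rightarrow(3)$. Weak tail-equivalence says $t_n^{(a)}\ge t_{jn}^{(b)}$ and $t_n^{(b)}\ge t_{kn}^{(a)}$ for all $n$. For an arbitrary $h\in\mathcal{D}$, monotonicity gives $n\,h(t_n^{(a)})\ge \tfrac1j\,(jn)\,h(t_{jn}^{(b)})$, and passing to $\varliminf$ (a subsequence $\varliminf$ dominates the full $\varliminf$) yields $\varliminf_n n\,h(t_n^{(a)})\ge \tfrac1j\varliminf_m m\,h(t_m^{(b)})$; the symmetric inequality gives the reverse bound up to the factor $k$. Hence $\varliminf_n n\,h(t_n^{(a)})$ and $\varliminf_n n\,h(t_n^{(b)})$ are comparable up to multiplicative constants, so they are simultaneously $0$, finite and positive, or $\infty$, and the identical argument applies to $\varlimsup$. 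By Theorem \ref{hmeasures} (together with Corollary \ref{P=P0}, which lets me read the packing cells off the pre-packing $\varlimsup$) these $\varliminf$/$\varlimsup$ categories are exactly what determine membership in $\mathcal{H}_\alpha$ and $\mathcal{P}_\beta$, so every cell of the two partitions coincides, giving $(3)$.

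The main obstacle is the asymmetry that surfaces in $(1)\Rightarrow(4)$: because a generic $h_b\in\mathcal{D}$ need not have an inverse with any modulus of continuity, comparable \emph{values} $h_b(t_n^{(a)})\asymp h_b(t_n^{(b)})$ cannot be converted into comparable \emph{arguments} $t_n^{(a)}\asymp t_n^{(b)}$; all that injectivity delivers is the index-shifted inequality $t_n^{(a)}\ge t_{jn}^{(b)}$, which is why the conclusion is only weak tail-equivalence and not genuine tail-equivalence. This is exactly where the doubling hypothesis on $h^{-1}$ mentioned in the introduction would intervene: if $h_b^{-1}$ were doubling, the factor $j$ could be absorbed and one would recover $r_n^{(a)}\asymp r_n^{(b)}$. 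I would flag this point explicitly so that the sharpness of the word ``weak'' in statement $(4)$ is transparent, and so that the reader sees precisely which step fails to give the stronger relation.
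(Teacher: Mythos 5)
Your proposal follows the same cyclic chain $(1)\Rightarrow(4)\Rightarrow(3)\Rightarrow(2)\Rightarrow(1)$ as the paper, and the links $(1)\Rightarrow(4)$, $(3)\Rightarrow(2)$ and $(2)\Rightarrow(1)$ are essentially the published arguments (for $(2)\Rightarrow(1)$ the paper invokes Corollary \ref{equivalent} where you invoke the Lemma on associated functions together with the regularity remark; these amount to the same thing).

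There is, however, one genuine gap, in $(4)\Rightarrow(3)$: the sentence ``the identical argument applies to $\varlimsup$'' is not correct as stated. Your liminf step rests on the inequality $\varliminf_n (jn)\,h(t_{jn}^{(b)})\geq \varliminf_m m\,h(t_m^{(b)})$, i.e.\ that a liminf along a subsequence dominates the full liminf. For limsup this reverses: the limsup along the arithmetic subsequence $\{jn\}_n$ is in general only $\leq$ the full limsup, so from $n\,h(t_n^{(a)})\geq \tfrac1j (jn)\,h(t_{jn}^{(b)})$ you cannot immediately conclude $\varlimsup_n n\,h(t_n^{(a)})\geq \tfrac1j\varlimsup_m m\,h(t_m^{(b)})$. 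The repair needs the additional observation, used explicitly in the paper, that $n\mapsto r_n^{(b)}/n$ is non-increasing: if $jn\leq m<j(n+1)$ then $t_m^{(b)}\leq t_{jn}^{(b)}$ and $m\leq 2jn$, whence $m\,h(t_m^{(b)})\leq 2(jn)\,h(t_{jn}^{(b)})$, so the subsequence $\{jn\}$ does capture the full limsup up to a factor of $2$. (The paper implements exactly this by taking a sequence $n_i$ realizing the limsup for $b$, setting $m_i=[n_i/j]$, and bounding $n_i\,h\bigl(t_{n_i}^{(b)}\bigr)\leq 2jm_i\,h\bigl(t_{m_i}^{(a)}\bigr)$.) With that interpolation inserted, your $\varlimsup$ comparison holds and the rest of the argument, including the concluding discussion of why only weak tail-equivalence can be extracted in $(1)\Rightarrow(4)$, is correct.
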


\begin{proof}
(2 $\Rightarrow $ 1) Since $C_{b}$ is $h_{b}$-regular, $C_{a}$ must also be $%
h_{b}$-regular. Thus $h_{a}$, $h_{b}\in \mathcal{H}_{1}^{a}\cap \mathcal{P}%
_{1}^{a}.$ But all functions in $\mathcal{H}_{1}^{a}\cap \mathcal{P}_{1}^{a}$
are equivalent by Cor. \ref{equivalent}.

(3 $\Rightarrow $ 2) is obvious since (2) could be stated as $\mathcal{H}%
_{1}^{a}\cap \mathcal{P}_{1}^{a}=\mathcal{H}_{1}^{b}\cap \mathcal{P}%
_{1}^{b}. $

(4 $\Rightarrow $ 3) It is easy to see that if there exists an integer $j$
such that $r_{n}^{(a)}\geq r_{jn}^{(b)}/j$, then $\mathcal{H}%
_{0}^{a}\subseteq \mathcal{H}_{0}^{b}$ and $\mathcal{H}_{\infty
}^{b}\subseteq \mathcal{H}_{\infty }^{a}$.

To see that $\mathcal{P}_{\infty }^{b}\subseteq \mathcal{P}_{\infty }^{a}$,
suppose there is a sequence  $\{n_i\}$ with $\lim_{i}n_{i}h\left(
r_{n_{i}}^{(b)}/n_{i}\right) =\infty .$ Let $m_{i}=\left[ \frac{n_{i}}{j}%
\right] $ where $[z]$ means the integer part of $z.$ As $n_{i}\geq jm_{i}$
and $r_{n}/n$ is decreasing,
\begin{equation*}
\frac{r_{n_{i}}^{(b)}}{n_{i}}\leq \frac{r_{jm_{i}}^{(b)}}{jm_{i}}\leq \frac{%
r_{m_{i}}^{(a)}}{m_{i}}.
\end{equation*}%
Since $h$ is increasing
\begin{equation*}
2jm_{i}h\left( \frac{r_{m_{i}}^{(a)}}{m_{i}}\right) \geq n_{i}h\left( \frac{%
r_{n_{i}}^{(b)}}{n_{i}}\right) \rightarrow \infty ,
\end{equation*}%
therefore, $\varlimsup nh(\frac{r_{n}^{(a)}}{n})=\infty .$ A similar
argument proves $\mathcal{P}_{0}^{a}\subseteq \mathcal{P}_{0}^{b}$.

Consequently, if $a$ is weak tail-equivalent to $b$, then $\mathcal{H}%
_{\alpha }^{a}=\mathcal{H}_{\alpha }^{b}$ and $\mathcal{P}_{\alpha }^{a}=%
\mathcal{P}_{\alpha }^{b}$ for $\alpha =0,\infty $. This forces $\mathcal{H}%
_{1}^{a}=\mathcal{H}_{1}^{b}$ and $\mathcal{P}_{1}^{a}=\mathcal{P}_{1}^{b}.$

(1 $\Rightarrow $ 4) We will prove a slightly more general result, namely,
if $h_{a}\preceq h_{b}$, then there is a positive integer $j$ such that $%
r_{n}^{(a)}\geq r_{jn}^{(b)}/j$. There is no loss of generality in assuming $%
h_{x}(r_{n}^{(a)}/n)=h_{x}(r_{n}^{(b)}/n)=1/n$. 
As $h_{a}\preceq h_{b}$ there is an integer $j>0$ such that
\begin{equation*}
h_{b}\left( \frac{r_{n}^{(a)}}{n}\right) \geq \frac{1}{j}\,h_{a}\left( \frac{%
r_{n}^{(a)}}{n}\right) =\frac{1}{jn}=h_{b}\left( \frac{r_{jn}^{(b)}}{jn}%
\right)
\end{equation*}%
The increasingness of $h_{b}$ establishes the claim.
\end{proof}

\begin{corollary}
If $h_{a}\equiv h_{b}$, then $C_{a}$ and $C_{b}$ have the same Hausdorff and
packing dimensions.
\end{corollary}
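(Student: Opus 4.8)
The plan is to deduce the statement immediately from Theorem~\ref{main} together with the characterizations of Hausdorff and packing dimension recorded in Section~\ref{sec-measures}. The hypothesis $h_{a}\equiv h_{b}$ is precisely condition~(1) of Theorem~\ref{main}, so that theorem grants condition~(3): the dimension partitions of $C_{a}$ and $C_{b}$ coincide, i.e. $\mathcal{H}_{\alpha}^{a}\cap\mathcal{P}_{\beta}^{a}=\mathcal{H}_{\alpha}^{b}\cap\mathcal{P}_{\beta}^{b}$ for all $\alpha\leq\beta$ in $\{0,1,\infty\}$.

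First I would recover the individual marginal classes $\mathcal{H}_{\infty}$ and $\mathcal{P}_{\infty}$ from the partition. Since the ordering on measure values is $0<1<\infty$ and the partition ranges only over pairs with $\alpha\leq\beta$ (reflecting $H^{h}(E)\leq P^{h}(E)$), one has
\begin{equation*}
\mathcal{H}_{\infty}^{E}=\mathcal{H}_{\infty}^{E}\cap\mathcal{P}_{\infty}^{E},\qquad \mathcal{P}_{\infty}^{E}=\bigcup_{\alpha\in\{0,1,\infty\}}\left(\mathcal{H}_{\alpha}^{E}\cap\mathcal{P}_{\infty}^{E}\right),
\end{equation*}
the first reducing to a single block because $\mathcal{H}_{\infty}^{E}\subseteq\mathcal{P}_{\infty}^{E}$. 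Each block on the right coincides for $C_{a}$ and $C_{b}$ by~(3), so taking unions yields $\mathcal{H}_{\infty}^{a}=\mathcal{H}_{\infty}^{b}$ and $\mathcal{P}_{\infty}^{a}=\mathcal{P}_{\infty}^{b}$.

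Second I would invoke the definitions
\begin{equation*}
\dim_{H}E=\sup\{s:h_{s}\in\mathcal{H}_{\infty}^{E}\},\qquad \dim_{P}E=\sup\{s:h_{s}\in\mathcal{P}_{\infty}^{E}\}.
\end{equation*}
Because the classes $\mathcal{H}_{\infty}$ and $\mathcal{P}_{\infty}$ now agree for the two Cantor sets, the corresponding suprema over $s$ agree, giving $\dim_{H}C_{a}=\dim_{H}C_{b}$ and $\dim_{P}C_{a}=\dim_{P}C_{b}$ at once.

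I do not expect any genuine obstacle here: the entire analytic content is already packaged into Theorem~\ref{main}, and what remains is the bookkeeping observation that the six-block partition determines each marginal class $\mathcal{H}_{\alpha}$, $\mathcal{P}_{\beta}$ by taking unions, after which the single-parameter families $\{h_{s}\}$ inherit the agreement automatically. As an alternative route, one could instead use Theorem~\ref{main}~(1~$\Rightarrow$~4) to obtain weak tail-equivalence and then compare the tail formulas $\dim_{H}C_{a}=\varliminf_{n}(-\log n)/\log(r_{n}^{(a)}/n)$ with the analogous pre-packing formula directly; this works as well, but requires an explicit $\varliminf$/$\varlimsup$ estimate built from $r_{n}^{(a)}\geq r_{jn}^{(b)}/j$ and its symmetric counterpart, so the partition argument is the cleaner of the two.
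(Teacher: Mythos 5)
Your proposal is correct and matches the paper's (implicit) argument: the corollary is stated without proof precisely because it follows from Theorem~\ref{main} (1~$\Rightarrow$~3) together with the observation, already made in the introduction, that sets with the same dimension partition have the same Hausdorff and packing dimensions. Your bookkeeping step recovering the marginal classes $\mathcal{H}_{\infty}$ and $\mathcal{P}_{\infty}$ from the six blocks is exactly the right way to make that observation precise, so there is nothing to add.
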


\begin{example}
In \cite[Ex. 4.6]{GMS} a construction is given of a Cantor set $C_{a}$ which
has Hausdorff and packing dimension $s$, but $P_{0}^{s}(C_{a})=\infty .$
Thus if $C_{b}$ is any $s$-regular Cantor set, then $C_{a}$ and $C_{b}$ have
the same dimensions, but not the same dimension partitions.
\end{example}

In Theorem \ref{main} we proved that Cantor sets $C_{a}$ and $C_{b}$ have the same
dimension partition if and only if the sequences $a$ and $b$ are weak
tail-equivalent. We conclude the paper by determining when the class of
sequences weak tail-equivalent to $a$ coincides with the class of sequences
tail-equivalent to $a.$

Recall the example of the sequences $a=\{e^{-n}\}$ and $b=\{e^{-2n}\}$ which
are weak tail-equivalent, but not tail-equivalent. We can take $%
h_{a}(x)=|\log x|^{-1}$and  $h_{b}(x)=|\log \sqrt{x}|^{-1}.$ These
associated dimension functions are, of course, equivalent, but their inverse
functions are neither equivalent nor doubling. As we see below, the latter
is the key property.

\begin{lemma}\label{equi}
If $h\equiv g$ and $h^{-1}$ is doubling, then $h^{-1}\equiv g^{-1}.$ In particular $g^{-1}$ is doubling.
\end{lemma}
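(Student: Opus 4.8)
The plan is to unwind the equivalence $h\equiv g$ into a two-sided pointwise bound, apply the increasing inverse $h^{-1}$ to convert this into a comparison of $g^{-1}$ with dilates of $h^{-1}$, and then use the doubling hypothesis to absorb those dilations into multiplicative constants. First I would fix constants $0<c_1\leq c_2$ with $c_1h(x)\leq g(x)\leq c_2h(x)$ for all $x$, set $y=g(x)$ so that $x=g^{-1}(y)$, and rearrange to $h(x)\leq y/c_1$ and $h(x)\geq y/c_2$. Applying the increasing function $h^{-1}$ then yields the sandwich
\[
h^{-1}(y/c_2)\leq g^{-1}(y)\leq h^{-1}(y/c_1).
\]
If one happened to have $c_1\geq 1$ and $c_2\leq 1$, the conclusion would already follow from monotonicity alone; the whole point of the doubling hypothesis is to handle the generic case in which $y/c_1$ and $y/c_2$ are genuine dilates of $y$.

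This constant-absorption is where the real work lies, and it is the step I expect to be the main obstacle. Writing the doubling inequality for $h^{-1}$ as $h^{-1}(2t)\leq \tau^{-1}h^{-1}(t)$ and iterating gives $h^{-1}(2^k t)\leq \tau^{-k}h^{-1}(t)$ for every $k\geq 0$. Choosing $k$ with $2^k\geq 1/c_1$, I would bound the right-hand end by $h^{-1}(y/c_1)\leq h^{-1}(2^k y)\leq \tau^{-k}h^{-1}(y)$, giving $g^{-1}\preceq h^{-1}$. Symmetrically, choosing $m$ with $2^m\geq c_2$ and applying the same iterated inequality to $t=y/c_2$ gives $h^{-1}(y)\leq \tau^{-m}h^{-1}(y/c_2)$, hence $h^{-1}(y/c_2)\geq \tau^{m}h^{-1}(y)$ and therefore $h^{-1}\preceq g^{-1}$. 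Together these establish $h^{-1}\equiv g^{-1}$. The only subtlety to watch is the domain: one needs $y/c_1$ to remain in the domain of $h^{-1}$, but since the comparison of dimension functions only concerns small arguments, the finitely many boundary scales are harmless and can be folded into the constant.

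Finally, for the ``in particular'' clause I would deduce the doubling of $g^{-1}$ directly from the equivalence just proved. Having constants $0<\alpha\leq\beta$ with $\alpha h^{-1}\leq g^{-1}\leq \beta h^{-1}$, I chain
\[
g^{-1}(x)\geq \alpha\, h^{-1}(x)\geq \alpha\tau\, h^{-1}(2x)\geq \frac{\alpha\tau}{\beta}\, g^{-1}(2x),
\]
using the doubling of $h^{-1}$ in the middle inequality and the upper equivalence bound, in the form $h^{-1}\geq \beta^{-1}g^{-1}$, at the last step. Thus $g^{-1}$ is doubling with constant $\alpha\tau/\beta>0$. Everything outside the iterated-doubling estimate is routine monotonicity bookkeeping.
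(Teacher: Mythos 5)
Your proof is correct and follows essentially the same route as the paper: substitute $x=g^{-1}(y)$ into the two-sided bound $c_1h\leq g\leq c_2h$, apply the increasing function $h^{-1}$ to get $h^{-1}(y/c_2)\leq g^{-1}(y)\leq h^{-1}(y/c_1)$, and absorb the constants using the doubling of $h^{-1}$. The paper dispatches the constant-absorption step with the phrase ``for some (probably different) positive constants,'' whereas you carry it out explicitly by iterating the doubling inequality; this is exactly the detail the paper leaves implicit, and your verification of the ``in particular'' clause is likewise a correct filling-in of what the paper asserts without proof.
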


\begin{proof}
Since $h\equiv g$ we have $c_{1}h(g^{-1}(y))\leq y\leq c_{2}h(g^{-1}(y))$
for suitable constants $c_{1},c_{2}.$ As $h^{-1}$ is doubling this implies
that for some (probably different) positive constants, $c_{1}g^{-1}(y)\leq
h^{-1}(y)\leq c_{2}g^{-1}(y)$ and therefore $g^{-1}\equiv h^{-1}.$
\end{proof}

We can now prove the following Theorem.

\begin{theorem}\label{main-d}
Given a non-increasing, summable sequence  $a$, let $W_{a}$ denote the equivalence class of sequences weak tail-equivalent to $a$. The following are equivalent:

\begin{enumerate}
\item There exists $b \in W_a$ such that $h_b^{-1}$ is doubling.
\item For every $b \in W_a$, $h_b^{-1}$ is doubling.
\item Every $b \in W_a$  is tail-equivalent to $a$.
\end{enumerate}
\end{theorem}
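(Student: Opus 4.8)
My plan is to establish the cycle of implications $(1)\Rightarrow(2)\Rightarrow(3)\Rightarrow(1)$; the first two are soft consequences of Lemma~\ref{equi} and Theorem~\ref{main}, while $(3)\Rightarrow(1)$ carries the real content. For $(1)\Rightarrow(2)$, suppose $b_0\in W_a$ has $h_{b_0}^{-1}$ doubling and let $b\in W_a$ be arbitrary. Since $W_a$ is a single weak tail-equivalence class, $b$ and $b_0$ are weak tail-equivalent, so $h_b\equiv h_{b_0}$ by Theorem~\ref{main}; Lemma~\ref{equi}, applied with $h=h_{b_0}$ and $g=h_b$, then gives $h_b^{-1}\equiv h_{b_0}^{-1}$, and in particular $h_b^{-1}$ is doubling.

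For $(2)\Rightarrow(3)$, note that $a$ is (trivially) weak tail-equivalent to itself, so $a\in W_a$ and hence $h_a^{-1}$ is doubling by hypothesis. Let $b\in W_a$. Then $h_b\equiv h_a$ by Theorem~\ref{main}, and since $h_a^{-1}$ is doubling, Lemma~\ref{equi} upgrades this to $h_a^{-1}\equiv h_b^{-1}$. Choosing, exactly as in the proof of Theorem~\ref{main}, the normalized representatives for which $h_a(r_n^{(a)}/n)=1/n=h_b(r_n^{(b)}/n)$, we have $h_a^{-1}(1/n)=r_n^{(a)}/n$ and $h_b^{-1}(1/n)=r_n^{(b)}/n$; evaluating $h_a^{-1}\equiv h_b^{-1}$ at $y=1/n$ shows $r_n^{(a)}/n$ and $r_n^{(b)}/n$ are comparable, i.e. $\{r_n^{(a)}\}\equiv\{r_n^{(b)}\}$. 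Thus $b$ is tail-equivalent to $a$.

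The heart of the argument is $(3)\Rightarrow(1)$, which I would prove by contraposition. Negating $(1)$ says that no $b\in W_a$ has doubling inverse; taking $b=a$, this means $h_a^{-1}$ is not doubling, and I must produce some $b\in W_a$ that is not tail-equivalent to $a$. The crucial step---and the one demanding the most care---is the dictionary between the doubling of $h_a^{-1}$ and the decay of the tails: writing $t_n=r_n^{(a)}/n$, I claim that $h_a^{-1}$ is doubling if and only if $\inf_n r_{2n}^{(a)}/r_n^{(a)}>0$. One direction is immediate, since evaluating $h_a^{-1}(y)\ge\tau h_a^{-1}(2y)$ at $y=1/(2n)$ gives $t_{2n}\ge\tau t_n$, i.e. $r_{2n}^{(a)}\ge 2\tau r_n^{(a)}$. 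The converse (the direction I actually need) is a routine monotone-interpolation estimate: for $1/(n+1)<y\le 1/n$ one sandwiches $h_a^{-1}(y)/h_a^{-1}(2y)$ between $t_{n+1}$ and $t_{\lfloor n/2\rfloor}$, and bounds this ratio below using the hypothesis $t_{2m}\ge\tau_0 t_m$ (with $\tau_0=\tfrac12\inf_n r_{2n}^{(a)}/r_n^{(a)}$) together with the monotonicity of $\{t_n\}$ across index-blocks $[m,2m]$. Granting the claim, the failure of doubling yields indices $n_k\to\infty$ with $r_{2n_k}^{(a)}/r_{n_k}^{(a)}\to 0$.

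It remains to build the witness. I set $b_n=a_{2n}+a_{2n+1}$. Since $a$ is positive and non-increasing, $b$ is positive, summable ($\sum_n b_n=\sum_{m\ge 2}a_m$), and non-increasing (because $a_{2n}\ge a_{2n+2}$ and $a_{2n+1}\ge a_{2n+3}$), so $b$ is an admissible sequence, and its tails telescope:
\[ r_n^{(b)}=\sum_{j\ge n}(a_{2j}+a_{2j+1})=\sum_{m\ge 2n}a_m=r_{2n}^{(a)}. \]
Weak tail-equivalence is then immediate from Definition~\ref{weak-tail}: the inequality $r_n^{(a)}\ge r_{2n}^{(a)}=r_n^{(b)}$ handles the first condition with $j=1$, and $r_n^{(b)}=r_{2n}^{(a)}\ge r_{2n}^{(a)}/2$ the second with $k=2$, so $b\in W_a$. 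However, tail-equivalence of $a$ and $b$ would force $r_n^{(b)}/r_n^{(a)}=r_{2n}^{(a)}/r_n^{(a)}$ to be bounded below by a positive constant, contradicting $r_{2n_k}^{(a)}/r_{n_k}^{(a)}\to 0$. Hence $b$ witnesses the failure of $(3)$, which completes the contrapositive and closes the cycle.
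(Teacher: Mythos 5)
Your argument is correct and follows essentially the same route as the paper: the equivalence of (1) and (2) via Lemma~\ref{equi} and Theorem~\ref{main}, the normalization $h_a^{-1}(1/n)=r_n^{(a)}/n$ for (2)$\Rightarrow$(3), and a constructed witness plus a discretization of the non-doubling of $h_a^{-1}$ for (3)$\Rightarrow$(1). The only real difference is cosmetic: your witness $b_n=a_{2n}+a_{2n+1}$ compresses the tails ($r_n^{(b)}=r_{2n}^{(a)}$, ratios tending to $0$ along a subsequence), whereas the paper's witness $b_{2k}=b_{2k+1}=a_k/2$ stretches them ($r_{2n}^{(b)}=r_n^{(a)}$, ratios unbounded above); both are legitimate.
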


\begin{proof}
$(1)\Longleftrightarrow (2)$ follows from the previous Lemma.

$(2) \Longrightarrow (3)$ Assume $b \in W_a$. Then $h_{a}^{-1} \equiv h_b^{-1}$ and both are doubling. These properties ensure that 
\begin{equation*}
c_{1}h_{a}^{-1}\left( \frac{1}{n}\right) \leq h_{b}^{-1}\left( \frac{1}{n}%
\right) \leq c_{2}h_{a}^{-1}\left( \frac{1}{n}\right) 
\end{equation*}%
and hence that there exist constants $c_{1}^{\prime },c_{2}^{\prime }$ such
that%
\begin{equation*}
c_{1}^{\prime }\frac{r_{n}^{(a)}}{n}\leq \frac{r_{n}^{(b)}}{n}\leq
c_{2}^{\prime }\frac{r_{n}^{(a)}}{n},
\end{equation*}%
that is, the sequence $a$ is tail-equivalent to $b.$

$(3) \Longrightarrow (1)$ We will prove this by contradiction. Suppose that $h_{a}^{-1}$
is not doubling. We will show there is a sequence $b \in W_a$ that is  not tail-equivalent to $a$. Indeed, let $b=\{b_{k}\}$
be defined as:%
\begin{equation*}
b_{1}=a_{1}, \;\; \;\;\;\;\;  b_{2k}=b_{2k+1}=\frac{a_{k}}{2}.
\end{equation*}%
Then $r_{2n}^{(b)}=r_{n}^{(a)}$ and by the definition of $h_a$, there exist
constants $c_{1}\leq c_{2}$ such that 
\begin{equation}
\frac{r_{n}^{(b)}}{r_{n}^{(a)}}=\frac{1}{2}\frac{r_{n/2}^{(a)}}{n/2}\frac{n}{%
r_{n}^{(a)}}\geq \frac{1}{2}\frac{h_{a}^{-1}(2c_{1}/n)}{h_{a}^{-1}(c_{2}/n)}%
\geq \frac{1}{2}\frac{h_{a}^{-1}(2c_{1}/n)}{h_{a}^{-1}(c_{1}/n)},
\label{ref}
\end{equation}%
where in the last inequality we used the fact that $h_{a}^{-1}$ is
increasing. 

Since $h_{a}^{-1}$ is both increasing and non-doubling, it follows that 
\begin{equation*}
\sup_{n}\frac{h_{a}^{-1}(2/n)}{h_{a}^{-1}(1/n)}=\infty .
\end{equation*}%
Using this in equation (\ref{ref}) we conclude that the ratios $%
r_{n}^{(b)}/r_{n}^{(a)}$ are not bounded above and thus the sequences $a$
and $b$ are not tail-equivalent. But as they are clearly weak tail-equivalent
$b \in W_a$. This
contradicts (3).
\end{proof}


\end{document}